\newtheorem{theorem}{Theorem}[section]
\newtheorem{lemma}[theorem]{Lemma}
\newtheorem{proposition}[theorem]{Proposition}
\newtheorem{observation}[theorem]{Observation}
\newtheorem{corollary}[theorem]{Corollary}
\theoremstyle{definition}
\newtheorem*{question*}{Question}
\theoremstyle{definition}
\author{Hannah Klawa}
\address{Department of Mathematical Sciences \\ George Mason University \\ Fairfax, VA  22030}
\email{hklawa@gmu.edu}
\title{Global perinormality in a generalized $D + M$ construction}
\subjclass[2010]{Primary: 13G05, Secondary: 13A15, 13B30, 13F05}
\keywords{Globally perinormal domains, Pullbacks, Flat overrings, Localizations, $D + M$ constructions}
\begin{document}

\begin{abstract} A domain $R$ is \emph{perinormal} if every going-down overring is flat and a perinormal domain $R$ is \emph{globally perinormal} if every flat overring is a localization of $R$~\cite{EpSh-peri}. I show that global perinormality is preserved in a pullback construction which encompasses a classical $D+M$ construction. In doing so, a result is given for the transfer of the property that every flat overring is a localization in the pullback construction considered.
\end{abstract}

\maketitle
\section{Introduction}

It is well-known that many classes of domains are preserved in the classical $D + M$ construction. A domain $R$ is \emph{perinormal} if every going-down overring is flat and a perinormal domain $R$ is \emph{globally perinormal} if every flat overring is a localization of $R$~\cite{EpSh-peri}. In~\cite[Theorem 2.7]{EpSh-peripull}, Epstein and Shapiro showed that perinormality is preserved in a more general version of the classical $D + M$ pullback in which one removes the condition that $V$ is of the form $k + M$ where $k$ is the residue field of $V$. In a  generalization of the classical $D + M$ construction considered in~\cite[Section 2]{Fon-toprings}, one starts with a local domain rather than a valuation domain. The purpose of this paper is to consider the preservation of global perinormality in this more general construction. Let $(T, M)$ be a local domain, $k$ the residue field of $T$, and $D$ be an integral domain with fraction field $k$. Building on the results in~\cite[Section 2]{Fon-toprings} and making use of~\cite[Theorem 2]{Ric-genqr}, we will study the transfer of the property that every flat overring is a localization to obtain that if the pullback $D \times_k T$ is globally perinormal, then $D$ and $T$ are globally perinormal. A partial converse is obtained by adding the assumption that $T$ is a valuation domain. This is analogous to the result in~\cite[Theorem 2.7(h)]{Fon-toprings} for going-down domains.

\section{Preliminary Material}

The following theorem by Richman gives a criterion for an overring of an integral domain $R$ to be flat as a $R$-module and is utilized in Section~\ref{fqrglobal}.

\begin{theorem}[\cite{Ric-genqr}, Theorem 2] Let $R$ be an integral domain and $S$ an overring of $R$. Then $S$ is flat over $R$ if and only if $S_{\mathfrak{m}} = 
R_{\mathfrak{m} \cap R}$ for every $\mathfrak{m} \in \textnormal{Max}(S)$. 
\end{theorem}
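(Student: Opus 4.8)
The plan is to prove both implications by reducing to the local setting and exploiting faithful flatness. Write $K$ for the common fraction field of $R$ and $S$, fix $\mathfrak{m} \in \operatorname{Max}(S)$, and set $\mathfrak{p} = \mathfrak{m} \cap R$. The first step is the elementary observation that $R \setminus \mathfrak{p} \subseteq S \setminus \mathfrak{m}$: any $a \in R \setminus \mathfrak{p}$ cannot lie in $\mathfrak{m}$, since otherwise $a \in \mathfrak{m} \cap R = \mathfrak{p}$. Consequently $R_{\mathfrak{p}} \subseteq S_{\mathfrak{m}}$ is an inclusion of local overrings of $R$ inside $K$, the elements of $R \setminus \mathfrak{p}$ are already units in $S_{\mathfrak{m}}$, and the induced map $R_{\mathfrak{p}} \to S_{\mathfrak{m}}$ is a local homomorphism.

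For the easy direction ($\Leftarrow$), I would use that flatness of $S$ as an $R$-module can be tested on the localizations of $S$ at its maximal ideals: namely, $S$ is flat over $R$ if and only if $S_{\mathfrak{m}}$ is flat over $R$ for every $\mathfrak{m} \in \operatorname{Max}(S)$. This follows because localization commutes with tensor products and a module vanishes precisely when all its localizations at maximal ideals of $S$ vanish. Granting this, the hypothesis $S_{\mathfrak{m}} = R_{\mathfrak{p}}$ exhibits each $S_{\mathfrak{m}}$ as a localization of $R$, hence as a flat $R$-module, and the flatness of $S$ follows at once.

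For the harder direction ($\Rightarrow$), assume $S$ is flat over $R$. Then $S_{\mathfrak{m}}$ is flat over $R$ by transitivity of flatness (localization is flat), and since every element of $R \setminus \mathfrak{p}$ is already a unit in $S_{\mathfrak{m}}$, base change along $R \to R_{\mathfrak{p}}$ shows $S_{\mathfrak{m}}$ is flat over $R_{\mathfrak{p}}$. A flat local homomorphism of local rings is faithfully flat, so $R_{\mathfrak{p}} \to S_{\mathfrak{m}}$ is faithfully flat. The key step is to deduce from this that the inclusion $R_{\mathfrak{p}} \subseteq S_{\mathfrak{m}}$ is an equality. I would invoke the general principle that for a faithfully flat extension $A \to B$ the natural map $N \to N \otimes_A B$ is injective for every $A$-module $N$, applied to $N = K / R_{\mathfrak{p}}$. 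Tensoring the short exact sequence $0 \to R_{\mathfrak{p}} \to K \to K/R_{\mathfrak{p}} \to 0$ with the flat module $S_{\mathfrak{m}}$ and using $K \otimes_{R_{\mathfrak{p}}} S_{\mathfrak{m}} = K$ identifies $(K/R_{\mathfrak{p}}) \otimes_{R_{\mathfrak{p}}} S_{\mathfrak{m}}$ with $K/S_{\mathfrak{m}}$. Any $x \in S_{\mathfrak{m}}$ then has image $0$ in $K/S_{\mathfrak{m}}$, so injectivity forces its class in $K/R_{\mathfrak{p}}$ to vanish, that is, $x \in R_{\mathfrak{p}}$. Hence $S_{\mathfrak{m}} = R_{\mathfrak{p}}$, as desired.

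I expect the $(\Rightarrow)$ direction to be the main obstacle, specifically the passage from faithful flatness to the equality of the two local overrings. The load-bearing facts are the identification of $(K/R_{\mathfrak{p}}) \otimes_{R_{\mathfrak{p}}} S_{\mathfrak{m}}$ with $K/S_{\mathfrak{m}}$ and the injectivity of $N \to N \otimes_A B$ for faithfully flat $A \to B$; these must be cited cleanly so that the argument does not implicitly assume what is being proved.
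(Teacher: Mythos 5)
Your proposal is correct, but there is nothing in the paper to compare it against: the paper does not prove this statement at all. It is quoted verbatim as Theorem 2 of Richman's paper \cite{Ric-genqr} and used purely as a black box (``Richman's criterion'') in Lemma~3.3 and Proposition~3.4. So the relevant question is only whether your argument stands on its own, and it does. The local criterion you invoke in the easy direction is the standard fact that for an $R$-algebra $S$ and an $S$-module $M$, flatness of $M$ over $R$ can be checked on the $M_{\mathfrak{m}}$ for $\mathfrak{m} \in \operatorname{Max}(S)$, for exactly the reason you give (localization commutes with tensor, and an $S$-module with vanishing localizations at all maximal ideals of $S$ is zero). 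In the harder direction, the base change giving flatness of $S_{\mathfrak{m}}$ over $R_{\mathfrak{p}}$ is legitimate because $R \setminus \mathfrak{p}$ already acts invertibly on $S_{\mathfrak{m}}$, so $S_{\mathfrak{m}} \cong S_{\mathfrak{m}} \otimes_R R_{\mathfrak{p}}$; the map $R_{\mathfrak{p}} \to S_{\mathfrak{m}}$ is local and flat, hence faithfully flat; and the descent step works as you describe. The one point you leave implicit is that, under the identification $(K/R_{\mathfrak{p}}) \otimes_{R_{\mathfrak{p}}} S_{\mathfrak{m}} \cong K/S_{\mathfrak{m}}$, the natural map $N \to N \otimes_{R_{\mathfrak{p}}} S_{\mathfrak{m}}$ becomes the canonical surjection $K/R_{\mathfrak{p}} \to K/S_{\mathfrak{m}}$; this is a short diagram chase (the middle vertical map $K \to K \otimes_{R_{\mathfrak{p}}} S_{\mathfrak{m}} \cong K$ is the identity), and it goes through, but it is load-bearing and worth stating explicitly since the whole conclusion $S_{\mathfrak{m}} \subseteq R_{\mathfrak{p}}$ rests on knowing which map is injective. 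For context, Richman's original treatment is more ideal-theoretic, organized around his notion of generalized quotient rings; your faithful-flatness argument is the standard modern proof and is more self-contained.
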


Given a diagram 
$$\xymatrix{ & S \ar@{^{(}->}[d]^-\iota\\ T \ar@{->>}[r]^-\pi & R}$$
in the category of commutative rings, let 
$$S \times_R T \coloneqq  \{t \in T \,\vert \, \pi(t) \in \iota(S)\}$$
denote the pullback of $S$ and $T$ over $R$. Note that $S \times_R T$ is a subring of $T$.

\begin{observation}\label{pullback-equal} Let $\pi: T \twoheadrightarrow R$ be a surjective ring homomorphism and $D$ and $S$ be subrings of $R$. If $S \times_R T = D \times_R T$, then $S = D$. 
\end{observation}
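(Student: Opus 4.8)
The plan is to recover each subring as the $\pi$-image of its own pullback and then apply $\pi$ to the hypothesized equality. Since $\iota$ is simply the inclusion $S \hookrightarrow R$, the definition of the pullback unwinds to $S \times_R T = \{t \in T : \pi(t) \in S\}$, and likewise $D \times_R T = \{t \in T : \pi(t) \in D\}$; both of these are subsets of $T$, whereas $S$ and $D$ live in $R$. The bridge between the two levels is the map $\pi$ itself, so the first thing I would do is understand what $\pi$ does to each pullback.

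The key step is the identity $\pi(S \times_R T) = S$. The inclusion $\pi(S \times_R T) \subseteq S$ is immediate from the definition of the pullback. For the reverse inclusion I would use the surjectivity of $\pi$: given $s \in S \subseteq R$, choose $t \in T$ with $\pi(t) = s$; then $\pi(t) = s \in S$ shows $t \in S \times_R T$, so $s \in \pi(S \times_R T)$. The identical argument yields $\pi(D \times_R T) = D$. Applying $\pi$ to the assumed equality $S \times_R T = D \times_R T$ then gives $S = \pi(S \times_R T) = \pi(D \times_R T) = D$, as desired.

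There is essentially no obstacle here beyond correctly identifying where each hypothesis is used: the computation $\pi(S \times_R T) = S$ rests entirely on $\pi$ being surjective. Without surjectivity one recovers only $S \cap \pi(T)$, and the two pullbacks could coincide while $S$ and $D$ differ outside the image of $\pi$; thus the surjectivity of $\pi$ is precisely the ingredient that makes the statement true.
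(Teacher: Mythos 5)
Your proof is correct and is exactly the argument the paper compresses into its one-line justification (``this follows immediately because $\pi$ is surjective''): recovering $S = \pi(S \times_R T)$ and $D = \pi(D \times_R T)$ via surjectivity and applying $\pi$ to the hypothesized equality. Your closing remark on why surjectivity is indispensable is a nice touch, but the approach is the same as the paper's.
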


\begin{proof} This follows immediately because $\pi$ is surjective and $S$ and $D$ are subrings of $R$.
\end{proof}

\begin{lemma}\label{pullbacklocalization} Let $D$ be an integral domain with fraction field $k$ and let $\pi: T \twoheadrightarrow k$ be a surjective homomorphism. Let $W$ be a multiplicative subset of $D$. Then $D_W \times_k T = (D \times_k T)_V$ where $V \coloneqq \pi^{-1}(W)$.

\begin{proof} Let $W$ be a multiplicative subset of $D$ and let $V = \pi^{-1}(W)$. Let $y \in (D \times_k T)_V$. Then there is some $v \in V$ such that $vy \in D \times_k T$ so $\pi(vy) = \pi(v)\pi(y) \in D$. Because $\pi(v) \in W$, $\pi(y) = \frac{\pi(vy)}{\pi(v)} \in D_W$. Hence $y \in D_W \times_k T$.

Let $x \in D_W \times_k T$. Then $\pi(x) \in D_W$ and hence $\pi(x) = \frac{d}{w}$ for some $d \in D$, $w \in W$. Then there exists $t \in T$ and $v \in V$ such that $\pi(t) = d$ and $\pi(v) = w$. So $\pi(xv) = \pi(x)\pi(v) = \pi(t) \in D$. Thus $xv \in D \times_k T$ and hence $x = \frac{1}{v}(xv) \in (D \times_k T)_V$.
\end{proof}
\end{lemma}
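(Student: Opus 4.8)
The plan is to prove the asserted equality of rings by establishing the two inclusions $(D \times_k T)_V \subseteq D_W \times_k T$ and $D_W \times_k T \subseteq (D \times_k T)_V$ separately, using only the surjectivity of $\pi$ together with the defining relation $V = \pi^{-1}(W)$. Before starting I would record two routine preliminaries: that $V$ is multiplicatively closed in $T$ (it is the preimage of the multiplicative set $W$ and contains $1$), and that $\pi(V) = W$. The latter holds because $\pi$ maps onto $k$ and $W \subseteq D \subseteq k$, so $\pi(\pi^{-1}(W)) = W \cap \operatorname{im}\pi = W$; this is precisely what will let me lift denominators from $W$ back up to $V$.

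For the inclusion $D_W \times_k T \subseteq (D \times_k T)_V$, I would take $x$ with $\pi(x) \in D_W$ and write $\pi(x) = d/w$ with $d \in D$ and $w \in W$. Using surjectivity I can choose $v \in V$ with $\pi(v) = w$, and then $\pi(xv) = \pi(x)\pi(v) = d \in D$, so $xv \in D \times_k T$ by the definition of the pullback. Consequently $x = (xv)/v$ exhibits $x$ as an element of $(D \times_k T)_V$. For the reverse inclusion I would take $y \in (D \times_k T)_V$, choose $v \in V$ with $vy \in D \times_k T$, and apply $\pi$ to obtain $\pi(v)\pi(y) = \pi(vy) \in D$; since $\pi(v) \in W$, dividing gives $\pi(y) \in D_W$, that is, $y \in D_W \times_k T$.

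The step I expect to require the most care is the last one, namely making sense of $\pi(y)$ when $y$ is an element of the localization $(D \times_k T)_V$ rather than of $T$ itself. The clean way to handle this is to observe that, in the situation of interest, $\ker\pi = M$ is the maximal ideal of the local ring $T$, so any $v \in V$ satisfies $\pi(v) \neq 0$ and hence $v \notin M$; since $T$ is local this makes $v$ a unit of $T$, whence $(D \times_k T)_V \subseteq T$ and $\pi$ applies directly. In a fully general formulation one instead notes that $V \cap \ker\pi = \emptyset$ (as $0 \notin W$), so $\pi$ extends to a surjection $T_V \twoheadrightarrow k$ and all the computations above may be carried out there. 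Once this ambient-ring point is settled, both inclusions are immediate.
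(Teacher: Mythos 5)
Your proof is correct and takes essentially the same route as the paper's: the same two inclusions, established by the same computations (applying $\pi$ to a relation $vy \in D \times_k T$ for one direction, and lifting the denominator $w \in W$ to some $v \in V$ via surjectivity of $\pi$ for the other). The only difference is that you explicitly justify why $\pi$ may be applied to elements of $(D \times_k T)_V$ at all --- namely that $V \cap \ker\pi = \emptyset$, so in the local-domain setting the elements of $V$ are units of $T$ and the localization sits inside $T$ --- a point the paper's proof uses silently; this is a welcome bit of added rigor, not a different argument.
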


The following propositions from~\cite{Fon-toprings} are essential for the work which follows and are stated here for the reader's convenience.

\begin{proposition}\label{fon-1.9} Let $D$ be an integral domain with fraction field $k$ and let $\pi: T \twoheadrightarrow k$ be a surjective homomorphism. If $W$ is a multiplicative subset of $D \times_k T$, then $(D \times_k T)_W = D_{W_D} \times_{k_{W_k}} T_W$ where $W_D$ is the image of $W$ in $D$ and $W_k$ is the image of $W$ in $k$.

\begin{proof} This is a special case of part of~\cite[Proposition 1.9]{Fon-toprings}. Equality holds with the pullback defined as above.
\end{proof}
\end{proposition}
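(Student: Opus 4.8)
The plan is to prove the asserted equality directly, by viewing both sides as subrings of $T_W$ and verifying a double inclusion through element-chasing; the one structural fact I would lean on is that the surjection $\pi$ localizes to a surjection $\pi_W \colon T_W \twoheadrightarrow k_{W_k}$. First I would record the preliminary identifications. Writing $R \coloneqq D \times_k T$, every $w \in W \subseteq R$ satisfies $\pi(w) \in D$, so the image of $W$ in $k$ already lies in $D$; hence $W_k = W_D = \pi(W)$ as multiplicative sets and $D_{W_D}$ is a subring of $k_{W_k}$. I may assume $0 \notin W_k$, since in the degenerate case $0 \in W_k$ one checks directly that both sides reduce to $T_W$. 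Under $0 \notin W_k$ the set $W$ avoids $\ker \pi$, so $T_W$ is a domain, the natural map $R_W \hookrightarrow T_W$ is injective, and $k_{W_k} = k$; the right-hand side then reads concretely as $\{\xi \in T_W : \pi_W(\xi) \in D_{W_D}\}$.

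For the inclusion ``$\subseteq$'' I would take a typical element $r/w$ of $R_W$, with $r \in R$ and $w \in W$, and apply $\pi_W$: since $\pi_W(r/w) = \pi(r)/\pi(w)$ with $\pi(r) \in D$ and $\pi(w) \in W_D$, the image lies in $D_{W_D}$, so $r/w$ belongs to the pullback. This direction is routine and uses nothing beyond the definitions.

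The reverse inclusion ``$\supseteq$'' carries the real content and is the step I expect to be the main obstacle. Given $x$ in the pullback, I write $x = t/w$ with $t \in T$, $w \in W$, so that $\pi_W(x) = \pi(t)/\pi(w) \in D_{W_D}$, say $\pi(t)/\pi(w) = d/s$ with $d \in D$ and $s \in W_D$. The difficulty is that this is an identity in the localization $D_{W_D}$ rather than in $D$, whereas to conclude $x \in R_W$ I must produce an honest element of the pullback $R$, not merely of the fraction field. To bridge this I would first clear the localization: the defining relation gives $u\,(s\,\pi(t) - \pi(w)\,d) = 0$ in $D$ for some $u \in W_D$, and cancelling $u \neq 0$ in the domain $D$ yields $s\,\pi(t) = \pi(w)\,d$ in $k$. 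Then I would lift $s$ through surjectivity of $\pi$, choosing $w' \in W$ with $\pi(w') = s$, so that $\pi(w't) = s\,\pi(t) = \pi(w)\,d \in D$ because $d \in D$ and $\pi(w) \in W_D \subseteq D$; thus $w't \in R$. Finally $x = t/w = (w't)/(w'w)$ with $w't \in R$ and $w'w \in W$ exhibits $x \in R_W$. The two devices that drive this direction — lifting the denominator via surjectivity of $\pi$ and cancelling in the domain $D$ — are precisely the standing hypotheses, which is why, once the degenerate case is set aside, the argument is essentially forced.
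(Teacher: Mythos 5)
Your argument is correct in substance, but it takes a genuinely different route from the paper's: the paper performs no computation at all, instead citing \cite[Proposition 1.9]{Fon-toprings}, of which this statement is a special case, and merely remarking that with the pullback realized as a subring (of $T$, resp.\ of $T_W$) the general isomorphism becomes an honest equality. Your double inclusion inside $T_W$ is precisely the verification that this remark leaves implicit, so what your approach buys is self-containedness and an explicit proof of equality on the nose; what the citation buys is brevity and the full generality of Fontana's pullback framework. Three small repairs are needed in your write-up, none of which disturbs its structure. First, the relation $\pi(t)/\pi(w) = d/s$ is an identity in the field $k$, not in $D$ (note $\pi(t)$ need not lie in $D$), so there is no localization relation to clear and no cancellation ``in the domain $D$'': cross-multiplying in $k$ gives $s\,\pi(t) = \pi(w)\,d$ at once, and the difficulty you flag there is not actually present. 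Second, the proposition as stated assumes only that $T$ is a ring surjecting onto $k$, not a domain, so the claim ``$T_W$ is a domain'' is not available; it is also not needed, since injectivity of $R_W \to T_W$ (where $R \coloneqq D \times_k T$) follows from exactness of localization applied to the inclusion $R \hookrightarrow T$ with $W \subseteq R$, in the degenerate case as well. Third, your lift $w' \in W$ of $s$ is furnished by the definition of $W_D$ as the image $\pi(W)$, not by surjectivity of $\pi$, which by itself would only produce a preimage in $T$ rather than in $W$. With these adjustments your proof stands; in particular your degenerate case $0 \in W_k$, where both sides reduce to $T_W$, agrees with the computation $D_{W_D} \times_{k_{W_D}} T_W = 0 \times_0 T_W = T_W$ that the paper itself invokes later in the proof of Proposition~\ref{fqrpullback}.
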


\begin{proposition}\label{Fontana-props} Let $D$ be an integral domain with fraction field $k$ and $(T,M)$ a local domain with residue field $k$. Let $P \in \textnormal{Spec}(D \times_k T)$.
\begin{enumerate}
\item The ideal $M$ is a common ideal of $T$ and $D \times_kT$ and every $P \in \textnormal{Spec}(D \times_k T)$ is comparable with $M$ with respect to inclusion.
\item There is an isomorphism between the lattice of all the ideals of $D$ and all the ideals of $D \times_k T$ containing $M$.
\item If $P \subseteq M$ and $Q$ is the unique prime ideal of $T$ corresponding to $P$, $(D \times_k T)_P = T_Q$.
\item If $M \subseteq P$, $(D \times_k T)_P = D_{\mathfrak{p}} \times_k T$ where $\mathfrak{p}$ is the unique prime ideal of $D$ corresponding to $P$.
\end{enumerate}
\begin{proof} Because $D \times_k T$ is a subring of $T$, $M = \textnormal{ker}(T \twoheadrightarrow k)$ is a common ideal of $T$ and $D \times_k T$ by~\cite[Lemma 1.1.4(1)]{Prufer}. The other claims are~\cite[Proposition 2.1(4), 2.2(2), 2.2(3) and 2.2(6)]{Fon-toprings} respectively. Equality holds with the pullback defined as above.
\end{proof}
\end{proposition}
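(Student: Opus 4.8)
Write $R := D \times_k T$ and let $\bar\pi \colon R \to D$ denote the restriction of $\pi$; it is onto (any $d \in D$ lifts to some $t \in T$ with $\pi(t) = d \in D$, so $t \in R$) and its kernel is exactly $M = \ker\pi$, whence $R/M \cong D$ and $M \subseteq R$. For part (1) the common-ideal assertion is then immediate: $M$ is the maximal ideal of $T$ and, being the kernel of $\bar\pi$, it is an ideal of $R$ as well. The plan for the comparability statement is to exploit that $T$ is local: every $a \in R \setminus M$ lies outside the maximal ideal of $T$, hence is a unit of $T$. Given a prime $P \subseteq R$ containing some $a \in R \setminus M$, I would show $M \subseteq aR \subseteq P$: for $m \in M$ one has $a^{-1}m \in T$ and $\pi(a^{-1}m) = \pi(a)^{-1}\pi(m) = 0$, so $a^{-1}m \in M \subseteq R$ and $m = a(a^{-1}m) \in aR$. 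Thus any prime meeting $R \setminus M$ contains $M$; contrapositively, if $M \not\subseteq P$ then $P \subseteq M$, giving comparability.

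Part (2) is then a formal consequence of $R/M \cong D$: the correspondence theorem identifies the ideals of $R$ containing $M$ with the ideals of $R/M \cong D$, and this identification is a lattice isomorphism carrying a prime $P \supseteq M$ to $\mathfrak p := \bar\pi(P)$, with inverse $\mathfrak p \mapsto \bar\pi^{-1}(\mathfrak p)$. For part (4) I would deduce the localization directly from Lemma~\ref{pullbacklocalization}. Taking the multiplicative set $W := D \setminus \mathfrak p$ and $V := \pi^{-1}(W)$, the point is that $V \subseteq R$ and $V = R \setminus P$, because $r \in P \iff \bar\pi(r) \in \mathfrak p$. Hence Lemma~\ref{pullbacklocalization} gives $D_{\mathfrak p} \times_k T = D_W \times_k T = (D \times_k T)_V = R_P$, the last equality because $V = R \setminus P$. (Alternatively one may invoke Proposition~\ref{fon-1.9} with $W = R \setminus P$, noting that $T_W = T$ since $W$ consists of units of $T$ and that $k$ is already a field.)

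The heart of the argument, and the step I expect to be the main obstacle, is part (3), where $P \subseteq M$. First I would fix the prime correspondence by showing that such a $P$ is in fact a prime ideal of $T$, so that one may take $Q = P$. Since $R$ and $T$ share a fraction field (for $0 \neq m \in M$ and $t \in T$ one has $mt \in M \subseteq R$, so $t = mt/m$), every $t \in T$ admits $s \in R \setminus M \subseteq R \setminus P$ with $st \in R$: writing $\pi(t) = a/b$ with $a,b \in D$, $b \neq 0$, choose $s \in R$ with $\bar\pi(s) = b$. Clearing denominators in this way and using that $P$ is prime in $R$ shows $P$ is closed under multiplication by $T$ and is prime in $T$, and $Q \cap R = Q$ yields the bijection with $\mathrm{Spec}(T)$. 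With $Q = P$ in hand, I would prove $R_P = T_Q$ by two inclusions. Since $R \subseteq T$ and $R \setminus P \subseteq T \setminus Q$, we get $R_P \subseteq T_Q$. For the reverse inclusion the common ideal $M$ again does the work: as the units of $T$ are irrelevant to $T_Q$, it suffices (when $P \subsetneq M$) to show $T \subseteq R_P$ and $1/u \in R_P$ for $u \in M \setminus P$; the first holds because $y = (u'y)/u' \in R_P$ for any fixed $u' \in M \setminus P$, since $u'y \in M \subseteq R$ and $u' \in R \setminus P$, and the second because $u \in R \setminus P$. The case $P = M$ is separate: there $T_Q = T_M = T$, and the denominator-clearing argument shows $R_M = T$ as well. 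Hence $T_Q \subseteq R_P$ and equality follows.

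The routine pieces are the common-ideal verification and the $R/M \cong D$ bookkeeping behind (2) and (4). The genuine content lies in the local-ring unit trick underlying comparability in (1) and, above all, in (3): setting up the prime correspondence below $M$ and clearing denominators into $R$ so that $R_P$ and $T_Q$ coincide. I would expect the careful handling of the case $P = M$ and the passage between the fraction fields of $R$ and $T$ to require the most attention.
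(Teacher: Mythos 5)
Your proof is correct, but it takes a genuinely different route from the paper: the paper does not argue these claims at all, it simply cites them, attributing the common-ideal statement to~\cite[Lemma 1.1.4(1)]{Prufer} and parts (1)--(4) to~\cite[Proposition 2.1(4), 2.2(2), 2.2(3) and 2.2(6)]{Fon-toprings}, with the only added remark being that equality (not mere isomorphism) holds when the pullback is realized as a subring of $T$. Your argument is instead a self-contained verification resting on three observations: the restriction $\bar\pi\colon D\times_k T\to D$ is surjective with kernel $M$ (which gives the common ideal in (1) and, via the correspondence theorem, part (2)); every element of $R\setminus M$ is a unit of $T$ (which gives comparability in (1)); and the denominator-clearing device $st\in R$ for suitable $s\in R\setminus M$ (which drives part (3), including the identification $Q=P$ and both inclusions $R_P\subseteq T_Q\subseteq R_P$). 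Deriving part (4) from the paper's own Lemma~\ref{pullbacklocalization} by checking $\pi^{-1}(D\setminus\mathfrak p)=R\setminus P$ is a nice economy, since that lemma was proved independently of this proposition. Two spots in your write-up are compressed but repairable: in (3), to pass from $s_1t_1\in P$ to $t_1\in P$ when showing $P$ is prime in $T$, you must invoke once more that $s_1\notin M$ is a unit of $T$, so that $t_1=s_1^{-1}(s_1t_1)$ lies in $P$ because $P$ has already been shown to be a $T$-ideal; and your fraction-field and clearing arguments tacitly assume $M\neq 0$ (if $M=0$ then $T=k$, $D\times_k T=D$, and everything is degenerate but true). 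What your route buys is independence from the external references, making the proposition elementary and transparent from the pullback structure alone; what the paper's route buys is brevity and alignment with Fontana's general theory, of which these statements are special cases.
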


\section{Preservation of flat overrings and global perinormality}\label{fqrglobal}

Proposition~\ref{fqrpullback} gives a result for the transfer of the property that every flat overring is a localization. From this, one readily obtains a result for the preservation of global perinormality as a corollary. To prove Proposition~\ref{fqrpullback}, we will utilize the following analog of~\cite[Theorem 3.1]{GiBa-overringsDM} in which we observe that the valuation domain need not be of the form $k + M$ where $k$ is the residue field $V$.

\begin{proposition}[Analog of Theorem 3.1 in~\cite{GiBa-overringsDM} with $V$ not necessarily equal to $k + M$.]\label{GiBa-Thm3.1} Let $(V,M)$ be a valuation domain with residue field $k$ and fraction field $K$.  Let $D$ be an integral domain that is a subring of $k$. Then each $D \times_k V$-submodule of $K$ compares with $V$ with respect to inclusion. Furthermore, the set of overrings of $D \times_k V$ is $\{S_{\alpha}\}_{\alpha \in \mathcal{A}} \cup \{T_{\beta} \times_k V\}_{\beta \in \mathcal{B}}$ where $\{S_{\alpha}\}_{\alpha\in\mathcal{A}}$ is the set of overrings of $V$ and $\{T_{\beta}\}_{\beta \in\mathcal{B}}$ is the set of subrings of $k$ containing $D$.

\begin{proof} Let $\pi: V \twoheadrightarrow k$ be the canonical mapping.

The proof in~\cite[proof of Theorem 3.1]{GiBa-overringsDM} that each $D \times_k V$-submodule of $K$ compares with $V$ with respect to inclusion follows through without modification.

The set of overrings of $D \times_k V$ contains the set $\{S_{\alpha}\}_{\alpha \in \mathcal{A}} \cup \{T_{\beta} \times_k V\}_{\beta \in \mathcal{B}}$ where $\{S_{\alpha}\}_{\alpha\in\mathcal{A}}$ is the set of overrings of $V$ and $\{T_{\beta}\}_{\beta \in\mathcal{B}}$ is the set of subrings of $k$ containing $D$. Any overring of $D \times_k T$ compares with $V$ with respect to inclusion. Clearly any overring of $V$ is in the described set. Let $T$ be an overring of $D \times_k V$ such that $T \subseteq V$. Then $$\pi^{-1}(D) \slash M  \subseteq T \slash M \subseteq V \slash M = k$$ so $T \slash M$ is a subring of $k$ containing $\pi^{-1}(D) \slash M = D$. Note that 
\begin{eqnarray*}
T & = &  \{v \in V\,: \, v \in T\}\\
& = & \{v \in V \, : \, v + M \in T\slash M\}\\
& = & \{v \in V\, : \, \pi(v) \in T \slash M \}\\
& = & \pi^{-1}(T \slash M).
\end{eqnarray*}
\end{proof}
\end{proposition}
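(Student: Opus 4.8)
The plan is to set $R := D \times_k V$ and to work throughout with the residue map $\pi \colon V \twoheadrightarrow k$, whose kernel $M$ satisfies $M \subseteq R \subseteq V$ because $\pi(M) = 0 \in D$; thus $R = \pi^{-1}(D)$ and $M$ is a common ideal of $R$ and $V$. The statement has two halves, and I would prove the comparability claim first, since the classification of overrings then falls out by combining comparability with the correspondence between subrings of $k$ containing $D$ and subrings of $V$ containing $R$.

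For comparability, I would show that any $R$-submodule $A$ of $K$ not contained in $V$ must contain $V$. The single fact I would exploit is that $V$ is a valuation domain: if $x \in A \setminus V$ then $x \neq 0$ and $x \notin V$ force $x^{-1} \in M$, since $x \notin V$ gives $x^{-1} \in V$ and $x^{-1}$ cannot be a unit (else $x \in V$). Now for an arbitrary $v \in V$ the element $v x^{-1}$ lies in $M$, hence in $R$, so $v = (v x^{-1}) x \in R x \subseteq A$; as $v$ was arbitrary, $V \subseteq A$. The point I would stress is that this argument uses only $M \subseteq R$, never the stronger hypothesis $V = k + M$, which is precisely why the analog of~\cite[Theorem 3.1]{GiBa-overringsDM} survives the weakening.

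For the classification, the inclusion $\{S_\alpha\} \cup \{T_\beta \times_k V\} \subseteq \{\text{overrings of } R\}$ is routine: every overring of $V$ contains $R$ since $R \subseteq V$, and for a subring $T_\beta$ of $k$ with $D \subseteq T_\beta$ one has $R = \pi^{-1}(D) \subseteq \pi^{-1}(T_\beta) = T_\beta \times_k V$. For the reverse inclusion I would take an arbitrary overring $S$ of $R$, view it as an $R$-submodule of $K$, and apply comparability. If $V \subseteq S$ then $S$ is an overring of $V$, hence some $S_\alpha$. If instead $S \subseteq V$, then $M \subseteq R \subseteq S \subseteq V$, so $S/M$ is a subring of $V/M = k$ containing $R/M = D$, i.e. $S/M = T_\beta$ for some $\beta$; and because $M \subseteq S$ one checks that $S = \pi^{-1}(S/M) = T_\beta \times_k V$, completing the identification.

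I expect the main obstacle to be the comparability step, both in isolating the single hypothesis $M \subseteq R$ that makes the valuation argument go through and in confirming that nothing in the original $D + M$ proof secretly relied on $V = k + M$. The classification is then largely bookkeeping, the only care needed being the passage $S = \pi^{-1}(S/M)$, which rests on $M \subseteq S$ so that $S$ is a union of cosets of $M$; this is the same $M$-adic correspondence recorded in Proposition~\ref{Fontana-props}(2), specialized to the valuation setting.
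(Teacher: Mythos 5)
Your proof is correct and structurally identical to the paper's: comparability of $R$-submodules of $K$ with $V$ first, then the classification of overrings via comparability and the identification $S = \pi^{-1}(S/M)$ when $S \subseteq V$. The only difference is that the paper outsources the comparability step to the cited proof of Gilmer--Bastida, asserting it goes through without modification, whereas you write that argument out explicitly ($x \notin V$ forces $x^{-1} \in M$, so $V = Vx^{-1}x \subseteq Rx \subseteq A$), which substantiates precisely the paper's claim that only $M \subseteq D \times_k V$, and never $V = k + M$, is needed.
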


Before proving Proposition~\ref{fqrpullback}, we prove the following lemma using Richman's Criterion for flatness~\cite[Theorem 2]{Ric-genqr}.

\begin{lemma}\label{pullbackflat} Let $(T, M)$ a local domain with residue field $k$ and $D$ be domain with fraction field $k$. Let $S$ be an overring of $D$. Then $S$ is a flat overring of $D$ if and only if $S \times_k T$ is a flat overring of $D \times_k T$.

\begin{proof} In what follows, let $\varphi: S \times_k T \twoheadrightarrow S$ be the canonical projection. Suppose that $S$ is a flat overring of $D$. Let $\tilde{N} \in \textnormal{Max}(S \times_k T)$ be arbitrary. Then there exists a unique $N \in \textnormal{Max}(S)$ corresponding to $\tilde{N}$ by Proposition~\ref{Fontana-props} (1) and (2). 
Because $S$ is a flat overring of $D$, by~\cite[Theorem 2]{Ric-genqr}, $S_N = D_{N \cap D}$. Again note that $N \cap D \subseteq S$ and 
$$\varphi^{-1}(N \cap D) = \varphi^{-1}(N) \cap \varphi^{-1}(D) = \tilde{N} \cap (D \times_k T)$$
where $\varphi: S \times_k T \rightarrow S$ is the canonical projection. Hence $\tilde{N} \cap (D \times_k T)$ is the unique prime ideal of $D \times_k T$ corresponding to $N \cap D$. Then 
\begin{eqnarray*}
(S \times_k T)_{\tilde{N}} & = &  S_N \times_k T \:\:\text{by Proposition~\ref{Fontana-props}(4)}\\
& = &D_{N \cap D} \times_k T \:\:\text{because $S_N = D_{N \cap D}$}\\
& = & (D \times_k T)_{\tilde{N} \cap (D \times_k T)} \:\:\text{by Proposition~\ref{Fontana-props}(4).}
\end{eqnarray*}
Because $\tilde{N} \in \textnormal{Max}(S \times_k T)$ was arbitrary, $S \times_k T$ is flat over $D \times_k T$ by~\cite[Theorem 2]{Ric-genqr}.
\bigskip

Conversely, suppose that $S \times_k T$ is a flat overring of $D \times_k T$. Let $N \in \textnormal{Max}(S)$ be arbitrary. Then there is a unique $\tilde{N} \in \textnormal{Max} (S \times_k T)$ corresponding to $N$ by Proposition~\ref{Fontana-props} (1) and (2). Because $S \times_k T$ is a flat overring of $D \times_k T$, 
$$(S \times_k T)_{\tilde{N}} = (D\times_k T)_{\tilde{N} \cap (D \times_k T)}$$
by~\cite[Theorem 2]{Ric-genqr}. Note that $N \cap D \subseteq S$ and 
$$\varphi^{-1}(N \cap D) = \varphi^{-1}(N) \cap \varphi^{-1}(D) = \tilde{N} \cap (D \times_k T).$$
So $\tilde{N} \cap (D \times_k T)$ is the unique prime ideal of $D \times_k T$ corresponding to $N \cap D$.
Then 
\begin{eqnarray*}
S_N \times_k T &= &(S \times_k T)_{\tilde{N}}\:\:\text{by Proposition~\ref{Fontana-props}(4)}\\
& = &(D \times_k T)_{\tilde{N} \cap (D \times_k T)}\:\:\text{as noted above}\\
& = &D_{N \cap D} \times_k T\:\:\text{by Proposition~\ref{Fontana-props}(4)}.
\end{eqnarray*}
So $S_N = D_{N \cap D}$ by Observation~\ref{pullback-equal}. Because $N \in \textnormal{Max}(S)$ was arbitrary, $S$ is a flat overring of $D$ by~\cite[Theorem 2]{Ric-genqr}. 
\end{proof}
\end{lemma}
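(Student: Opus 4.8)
The plan is to prove both directions by invoking Richman's Criterion (Theorem~2 of~\cite{Ric-genqr}) on each side of the equivalence and transporting the resulting local equalities across the pullback via Proposition~\ref{Fontana-props}. Since $S$ is an overring of $D$ and $D$ has fraction field $k$, the ring $S$ is again a domain with fraction field $k$, so the pullback $S \times_k T$ is of the same type as $D \times_k T$ and Proposition~\ref{Fontana-props} applies verbatim to both. The central structural input is the lattice isomorphism of Proposition~\ref{Fontana-props}(1)--(2): the maximal ideals of $S \times_k T$ all contain the common ideal $M$ and correspond bijectively to the maximal ideals of $S$, and likewise for $D \times_k T$ and $D$. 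Writing $\varphi\colon S \times_k T \twoheadrightarrow S$ for the canonical projection, I would record at the outset the compatibility $\varphi^{-1}(N \cap D) = \tilde{N} \cap (D \times_k T)$ for a maximal ideal $\tilde{N}$ lying over $N \in \textnormal{Max}(S)$; this identifies $\tilde{N} \cap (D \times_k T)$ as precisely the prime of $D \times_k T$ corresponding to the contraction $N \cap D$ under the isomorphism of Proposition~\ref{Fontana-props}(2).

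For the forward direction, I would assume $S$ is flat over $D$ and fix an arbitrary $\tilde{N} \in \textnormal{Max}(S \times_k T)$ with associated $N \in \textnormal{Max}(S)$. Richman's Criterion gives $S_N = D_{N \cap D}$. Because $\tilde{N}$ and its contraction $\tilde{N} \cap (D \times_k T)$ both contain $M$, part~(4) of Proposition~\ref{Fontana-props} is the applicable localization formula, so I can rewrite
\[
(S \times_k T)_{\tilde{N}} = S_N \times_k T = D_{N \cap D} \times_k T = (D \times_k T)_{\tilde{N} \cap (D \times_k T)},
\]
where the outer equalities are part~(4) and the middle one is the flatness hypothesis. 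As $\tilde{N}$ was arbitrary, Richman's Criterion yields that $S \times_k T$ is flat over $D \times_k T$.

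The converse runs symmetrically: assuming $S \times_k T$ flat over $D \times_k T$, I would fix $N \in \textnormal{Max}(S)$, pass to the corresponding $\tilde{N} \in \textnormal{Max}(S \times_k T)$, and apply Richman's Criterion to obtain $(S \times_k T)_{\tilde{N}} = (D \times_k T)_{\tilde{N} \cap (D \times_k T)}$. Applying Proposition~\ref{Fontana-props}(4) on both sides converts this into $S_N \times_k T = D_{N \cap D} \times_k T$. The one genuinely new step here is to strip off the common factor $\times_k T$, which is exactly what Observation~\ref{pullback-equal} permits, delivering $S_N = D_{N \cap D}$ and hence flatness of $S$ over $D$ by Richman's Criterion.

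The main obstacle I anticipate is not any single computation but ensuring that the bookkeeping of the prime correspondence is airtight: one must confirm that the maximal ideals of the pullbacks genuinely contain $M$ (so that part~(4) rather than part~(3) of Proposition~\ref{Fontana-props} governs the relevant localization), and that $\tilde{N} \cap (D \times_k T)$ is the image of $N \cap D$ under the ideal-lattice isomorphism, so that the two invocations of Richman's Criterion are tested at matching primes. Once this correspondence is pinned down, both directions reduce to the displayed chains of equalities.
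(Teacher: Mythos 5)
Your proposal is correct and follows essentially the same route as the paper's own proof: both directions use Richman's Criterion at corresponding maximal ideals, transported via the lattice isomorphism of Proposition~\ref{Fontana-props}(1)--(2), the compatibility $\varphi^{-1}(N \cap D) = \tilde{N} \cap (D \times_k T)$, the localization formula of Proposition~\ref{Fontana-props}(4), and Observation~\ref{pullback-equal} to cancel the common factor in the converse. Your explicit remark that $S$ is again a domain with fraction field $k$ (so the pullback machinery applies to $S \times_k T$) is a point the paper leaves implicit, but it does not change the argument.
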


\begin{proposition}\label{fqrpullback} Let $D$ be an integral domain with fraction field $k$. Let $(T,M)$ be a local domain with residue field $k$. If $D \times_k T$ has the property that every flat overring is a localization of $D \times_k T$, then so does $D$. The converse holds if $T$ is a valuation domain.

\begin{proof} Let $D$ be an integral domain with fraction field $k$. Let $(T, M)$ be a local domain with residue field $k$ and let $\pi: T \twoheadrightarrow k$ be the canonical homomorphism. Suppose that every flat overring of $D \times_k T$ is a localization of $D \times_k T$. Let $S$ be a flat overring of $D$. By Lemma~\ref{pullbackflat}, $S \times_k T$ is a flat overring of $D \times_k T$. Because every flat overring of $D \times_k T$ is a localization of $D \times_k T$, $S \times_k T = (D \times_k T)_W$ where $W$ is a multiplicative subset of $D \times_k T$. By Proposition~\ref{fon-1.9}, $(D \times_k T)_W = D_{W_D} \times_{k_{W_k}} T_W$ where $W_D$ is the image of $W$ in $D$ and $W_k$ is the image of $W$ in $k$. If $W \cap \textnormal{ker}(\pi) = \emptyset$, then $W$ consists of units of $T$ and hence $S \times_k T = D_{W_D} \times_{k_W} T_W = D_{W_D} \times_k T$. Hence $S = D_{W_D}$ by Observation~\ref{pullback-equal}. If $W \cap \textnormal{ker}(\pi) \neq \emptyset$, then 
$$S \times_k T = D_{W_D} \times_{k_{W_D}} T_W = 0 \times_0 T_W = T_W.$$
Hence $S \times_k T = T$ because $S \times_k T \subseteq T$. It follows that $S = k = D_{D\backslash\{0\}}$.

Now let $(T, M)$ be a valuation domain. Suppose that every flat overring of $D$ is a localization of $D$. Let $S$ be an arbitrary flat overring of $D \times_k T$. Then $S$ is an overring of $T$ or $S = R \times_k T$ where $R$ is an overring of $D$ by Proposition~\ref{GiBa-Thm3.1}. First, suppose $S$ is an overring of $T$. Then $S = T_P$ for some prime ideal $P$ of $T$ because every overring of a valuation domain is a localization at a prime ideal (see~\cite[Theorem 10.1]{Mat}). By Proposition~\ref{Fontana-props}(3), $T$ is a localization of $D \times_k T$. Hence $S$ is a localization of $D \times_k T$. Now suppose $S = R \times_k T$ where $R$ is an overring of $D$. By Lemma~\ref{pullbackflat}, $R$ is flat over $D$. Then $R = D_W$ where $W$ is a multiplicative subset of $D$ because every flat overring of $D$ is a localization of $D$. Then $S = R \times_k T = D_W \times_k T$ is a localization of $D \times_k T$ by Lemma~\ref{pullbacklocalization}.
\end{proof}
\end{proposition}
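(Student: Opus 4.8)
The plan is to prove both implications by transferring flatness between $D$ and the pullback $D \times_k T$ via Lemma~\ref{pullbackflat}, and then converting the resulting flat overrings into localizations using the compatibility of localization with the pullback recorded in Proposition~\ref{fon-1.9} and Lemma~\ref{pullbacklocalization}. The two directions use these ingredients in opposite order, so I would organize the argument as two self-contained halves.

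For the forward direction I would start with an arbitrary flat overring $S$ of $D$ and lift it: by Lemma~\ref{pullbackflat}, $S \times_k T$ is a flat overring of $D \times_k T$, so the hypothesis gives $S \times_k T = (D \times_k T)_W$ for some multiplicative set $W \subseteq D \times_k T$. Applying Proposition~\ref{fon-1.9} rewrites this localization as $D_{W_D} \times_{k_{W_k}} T_W$, where $W_D$ and $W_k$ are the images of $W$ in $D$ and in $k$. The crux is then a dichotomy according to whether $W$ meets $\ker\pi = M$. If $W \cap M = \emptyset$, then since $T$ is local every element of $W$ is a unit of $T$, so $T_W = T$ and $k_{W_k} = k$; the pullback becomes $D_{W_D} \times_k T$, and Observation~\ref{pullback-equal} cancels the common factor to yield $S = D_{W_D}$, a localization of $D$.

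The step I expect to be the main obstacle is the complementary case $W \cap M \neq \emptyset$, where the construction degenerates: the image $W_k$ then contains $0$, forcing $D_{W_D} = 0$ and $k_{W_k} = 0$, so that $D_{W_D} \times_{k_{W_k}} T_W = T_W$. I would argue that $S \times_k T = T_W$ while, on the other hand, $S \times_k T \subseteq T \subseteq T_W$ (as $W \subseteq D \times_k T \subseteq T$), which pins down $S \times_k T = T$; tracing through $\pi$ then gives $S = k$, which is itself the localization $D_{D \setminus \{0\}}$. The care needed here is to confirm that the degenerate pullback is handled correctly and that this forces $S$ to be the full fraction field, hence genuinely a localization of $D$.

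For the converse, now assuming $T$ is a valuation domain, I would invoke the structure theorem of Proposition~\ref{GiBa-Thm3.1}: every overring of $D \times_k T$ is either an overring of $T$ or of the form $R \times_k T$ with $R$ an overring of $D$. Given a flat overring $S$, in the first case every overring of a valuation domain is a localization at a prime by~\cite[Theorem 10.1]{Mat}, so $S = T_P$, and Proposition~\ref{Fontana-props}(3) exhibits such localizations of $T$ as localizations of $D \times_k T$. In the second case, Lemma~\ref{pullbackflat} shows $R$ is flat over $D$, so the hypothesis gives $R = D_W$, and Lemma~\ref{pullbacklocalization} turns $R \times_k T = D_W \times_k T$ into the localization $(D \times_k T)_{\pi^{-1}(W)}$. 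Assembling the two cases completes the converse.
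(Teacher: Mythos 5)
Your proposal is correct and follows essentially the same route as the paper's own proof: lifting via Lemma~\ref{pullbackflat}, applying Proposition~\ref{fon-1.9} with the dichotomy on $W \cap M$ (including the degenerate case where the pullback collapses to $T_W$ and $S = k$), and using Proposition~\ref{GiBa-Thm3.1} together with Lemma~\ref{pullbacklocalization} for the converse. No gaps to report.
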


In~\cite[Theorem 2.7]{EpSh-peripull} which is given below for the reader's convenience, Epstein and Shapiro showed that perinormality is preserved in a more general version of the classical $D + M$ pullback.

\begin{theorem}[\cite{EpSh-peripull}, Theorem 2.7] Let $D$ be an integral domain with fraction field $k$. Let $V$ be a valuation domain with residue field $k$. Then $D \times_k V$ is perinormal if and only if $D$ is perinormal.
\end{theorem}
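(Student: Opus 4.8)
The plan is to reduce the statement to the flatness correspondence already established in Lemma~\ref{pullbackflat}, together with the classification of overrings in Proposition~\ref{GiBa-Thm3.1}, so that the whole theorem rests on a single spectral claim: the extension $D \times_k V \hookrightarrow R \times_k V$ is going-down if and only if $D \hookrightarrow R$ is going-down, where $R$ ranges over the overrings of $D$ inside $k$.

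First I would dispose of the overrings of $V$. By Proposition~\ref{GiBa-Thm3.1}, every overring of $A \coloneqq D \times_k V$ is either an overring of $V$ or of the form $R \times_k V$ with $D \subseteq R \subseteq k$. Since $V$ is a valuation domain, each overring of $V$ is of the form $V_Q$ for some $Q \in \textnormal{Spec}(V)$ by~\cite[Theorem 10.1]{Mat}, and Proposition~\ref{Fontana-props}(3) identifies $V_Q$ with a localization $A_P$ of $A$. Thus every overring of $V$ is already a flat overring of $A$, so such overrings can never obstruct perinormality of $A$, and only the overrings $R \times_k V$ remain to be analyzed.

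Granting the spectral claim, both implications follow quickly from Lemma~\ref{pullbackflat}, which says that $R \times_k V$ is flat over $A$ if and only if $R$ is flat over $D$. Suppose first that $D$ is perinormal and let $S$ be a going-down overring of $A$. Either $S$ is an overring of $V$, hence flat by the previous paragraph, or $S = R \times_k V$ with $R$ a going-down overring of $D$ by the claim; in the latter case perinormality of $D$ makes $R$ flat over $D$, so $S$ is flat over $A$ by Lemma~\ref{pullbackflat}. Hence $A$ is perinormal. Conversely, suppose $A$ is perinormal and let $R$ be a going-down overring of $D$. By the claim $R \times_k V$ is a going-down overring of $A$, so it is flat over $A$, and therefore $R$ is flat over $D$ by Lemma~\ref{pullbackflat}. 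Hence $D$ is perinormal.

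The heart of the argument, and the step I expect to be the main obstacle, is the spectral claim. Using Proposition~\ref{Fontana-props}, $\textnormal{Spec}(A)$ is the chain $\textnormal{Spec}(V)$ (every prime of which lies in $M$) glued to $\textnormal{Spec}(D)$ along the single common prime $M$, which is at once the maximal ideal of $V$ and, under the identification $A/M \cong D$, the zero ideal of $D$; the same description holds for $B \coloneqq R \times_k V$, and the inclusion-induced map $\textnormal{Spec}(B) \to \textnormal{Spec}(A)$ restricts to the identity on the common chain $\textnormal{Spec}(V)$ and to the map $\textnormal{Spec}(R) \to \textnormal{Spec}(D)$ induced by $D \hookrightarrow R$ on the part lying above $M$. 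I would then verify the going-down property for $A \hookrightarrow B$ by a case analysis on the position of a given inclusion of primes relative to $M$, using that every prime is comparable to $M$ by Proposition~\ref{Fontana-props}(1). When the primes involved lie at or below $M$, going-down is automatic because the $V$-part is a chain carried isomorphically by the inclusion; when a required descent must cross $M$, one may always complete it by dropping into the $V$-part below $M$; and the only configuration imposing a genuine condition is the one in which all primes lie above $M$, where going-down for $A \hookrightarrow B$ is literally going-down for $D \hookrightarrow R$. Checking that contractions to $A$ never jump across the gluing prime $M$ in an unexpected way, so that these transitions behave as claimed, is the delicate point that must be handled with care.
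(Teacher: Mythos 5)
Your proof is correct, but there is nothing in the paper to compare it against: the paper does not prove this statement at all. It is quoted verbatim from Epstein and Shapiro \cite{EpSh-peripull} for the reader's convenience (indeed, even Observation~\ref{genD+Mperi} defers to ``the proof of \cite[Theorem 2.7]{EpSh-peripull}'' for the perinormality of $D$). What you have produced is therefore an independent, self-contained derivation from the paper's own toolkit, and it goes through. Your reduction is sound: Proposition~\ref{GiBa-Thm3.1} splits the overrings of $A \coloneqq D \times_k V$ into overrings of $V$, which are flat over $A$ since they are localizations $V_Q = A_P$ by \cite[Theorem 10.1]{Mat} and Proposition~\ref{Fontana-props}(3), and rings $R \times_k V$ with $D \subseteq R \subseteq k$, for which Lemma~\ref{pullbackflat} translates flatness over $A$ into flatness over $D$; granting the spectral claim, both implications follow exactly as you argue, and there is no circularity since Lemma~\ref{pullbackflat} and Propositions~\ref{GiBa-Thm3.1}, \ref{Fontana-props} are proved independently of the Epstein--Shapiro theorem. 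The spectral claim itself is true, and your ``delicate point'' is easier than you fear: every prime of $V$ is contained in $M \subseteq A \subseteq B \coloneqq R \times_k V$, so the primes of $B$ contained in $M$ are literally the primes of $V$ and contract to themselves in $A$; a prime of $B$ containing $M$ contracts to a prime of $A$ containing $M$; and the only prime of $B$ lying over $M \in \operatorname{Spec}(A)$ is $M$ itself, because a nonzero prime $\mathfrak{q}$ of $R$ always meets $D$ nontrivially (if $0 \neq x = a/b \in \mathfrak{q}$ with $a, b \in D$, then $0 \neq a = xb \in \mathfrak{q} \cap D$). With these observations your three-case analysis (both primes inside $M$: automatic, since the $V$-chain is common to $A$ and $B$; a descent crossing $M$: complete it through the prime of $V$ corresponding to $P'$; both primes containing $M$: literally going-down for $D \subseteq R$) closes the argument. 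Compared with simply citing \cite{EpSh-peripull}, your route makes the statement logically self-contained within this paper's framework, at the cost of the extra spectral bookkeeping.
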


In order to give the corollary in the generality in which it will be stated, it is necessary to note that the valuation domain assumption was not necessary in one direction.

\begin{observation}\label{genD+Mperi} Let $D$ be an integral domain with fraction field $k$. Let $(T, M)$ be a local domain with residue field $k$. If the pullback $D \times_k T$ is perinormal, then $D$ and $T$ are perinormal.

\begin{proof} Let $D$ be an integral domain with fraction field $k$. Let $(T, M)$ be a local domain with residue field $k$. By Proposition~\ref{Fontana-props}(3), $T$ is a localization of $D \times_k T$ which is perinormal. Thus $T$ is perinormal by~\cite[Proposition 2.5]{EpSh-peri}.

The perinormality of $D$ follows from the proof of~\cite[Theorem 2.7]{EpSh-peripull}.

\end{proof}
\end{observation}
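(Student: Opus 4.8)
The plan is to prove the two assertions by separate arguments, since $T$ appears as a localization of the pullback while $D$ does not. For $T$ I would proceed exactly as in the author's sketch: note that $M$ is a prime ideal of $D \times_k T$ (its quotient is $D$, a domain) with $M \subseteq M$, so Proposition~\ref{Fontana-props}(3) gives $(D \times_k T)_M = T_M = T$. Thus $T$ is a localization of the perinormal domain $D \times_k T$, and since perinormality passes to localizations by~\cite[Proposition 2.5]{EpSh-peri}, $T$ is perinormal. This half costs essentially nothing.

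The substance is the perinormality of $D$, and here my plan is to show that pulling back along $\pi$ carries going-down overrings of $D$ to going-down overrings of $D \times_k T$, and then to transfer flatness back through Lemma~\ref{pullbackflat}. So let $S$ be a going-down overring of $D$, i.e.\ $D \subseteq S \subseteq k$. First I would check that $S \times_k T$ is an overring of $D \times_k T$: both have fraction field $K = \mathrm{Frac}(T)$, since (for $M \neq 0$) every $t \in T$ equals $(bt)/b$ with $b \in M \setminus \{0\} \subseteq D \times_k T$; the degenerate case $M = 0$ forces $T = k$ and $D \times_k T = D$, where the statement is immediate. The heart of the matter is to verify that the inclusion $D \times_k T \hookrightarrow S \times_k T$ satisfies going-down, and for this I would read off the spectral dictionary from Proposition~\ref{Fontana-props}: every prime of either pullback is comparable with $M$; the primes contained in $M$ are, by Proposition~\ref{Fontana-props}(1),(3), exactly the primes of $T$ contained in $M$, so they lie inside the common ideal $M$ and are their own contractions along $D \times_k T \hookrightarrow S \times_k T$; and the kernel-$M$ surjections $D \times_k T \twoheadrightarrow D$ and $S \times_k T \twoheadrightarrow S$ identify the primes containing $M$ with $\mathrm{Spec}(D)$ and $\mathrm{Spec}(S)$ respectively, compatibly with the inclusion $D \hookrightarrow S$.

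With this dictionary in place, I would verify going-down by a short case check. Given primes $P_1 \subseteq P_2$ of $D \times_k T$ and a prime $\tilde{Q}_2$ of $S \times_k T$ lying over $P_2$, if $P_1 \subseteq M$ then $P_1$ is itself a prime of $S \times_k T$ satisfying $P_1 \subseteq P_2 \subseteq \tilde{Q}_2$ and contracting to $P_1$, so $\tilde{Q}_1 := P_1$ works; if instead $M \subseteq P_1$, then $P_1, P_2, \tilde{Q}_2$ all contain $M$ and correspond to $\mathfrak{p}_1 \subseteq \mathfrak{p}_2$ in $D$ and to $\mathfrak{q}_2$ in $S$ with $\mathfrak{q}_2 \cap D = \mathfrak{p}_2$, whence going-down for $D \hookrightarrow S$ supplies $\mathfrak{q}_1 \subseteq \mathfrak{q}_2$ over $\mathfrak{p}_1$, and the prime of $S \times_k T$ corresponding to $\mathfrak{q}_1$ serves as $\tilde{Q}_1$. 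Hence $S \times_k T$ is a going-down overring of $D \times_k T$; perinormality of $D \times_k T$ then makes it flat over $D \times_k T$, and Lemma~\ref{pullbackflat} gives that $S$ is flat over $D$. As $S$ was an arbitrary going-down overring, $D$ is perinormal.

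I expect the main obstacle to be the spectral bookkeeping underlying the going-down verification, namely pinning down precisely, via Proposition~\ref{Fontana-props}, that inclusions and lying-over among primes containing $M$ mirror those for $D \hookrightarrow S$ while primes below $M$ are shared by the two pullbacks. Once that correspondence is made careful, the two-case check is routine and the rest is formal. As the author indicates, one could instead extract this direction from the proof of~\cite[Theorem 2.7]{EpSh-peripull}, which never uses that $T$ is a valuation domain; I prefer the direct route above because it isolates exactly where the local (rather than valuation) hypothesis suffices.
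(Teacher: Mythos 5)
Your proposal is correct. The $T$ half is exactly the paper's argument: $T = (D\times_k T)_M$ by Proposition~\ref{Fontana-props}(3), and perinormality passes to localizations by \cite[Proposition 2.5]{EpSh-peri}. For the perinormality of $D$ you genuinely diverge from the paper: the paper gives no argument of its own, merely citing the proof of \cite[Theorem 2.7]{EpSh-peripull} (having noted beforehand that the valuation hypothesis there is unused in this direction), whereas you supply a self-contained proof --- transfer a going-down overring $S$ of $D$ to the overring $S \times_k T$ of $D \times_k T$, verify going-down for $D\times_k T \hookrightarrow S\times_k T$ via the spectral dictionary of Proposition~\ref{Fontana-props}, conclude $S\times_k T$ is flat by perinormality of the pullback, and descend flatness to $S$ through Lemma~\ref{pullbackflat}. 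Your two-case check is exhaustive by Proposition~\ref{Fontana-props}(1), your handling of the fraction-field issue (including the degenerate case $M=0$, $T=k$) is right, and the one step you compress --- that a prime $P \subseteq M$ of $D \times_k T$ is literally a prime ideal of $T$, hence also of $S \times_k T$ --- is a true consequence of Proposition~\ref{Fontana-props}(3): if $Q$ is the prime of $T$ corresponding to $P$, then $(D\times_k T)_P = T_Q$ forces $P(D\times_k T)_P = QT_Q$, so $Q \cap (D\times_k T) = P$, and $Q \subseteq M \subseteq D\times_k T$ then gives $Q = P$. What your route buys: Observation~\ref{genD+Mperi} becomes independent of the internals of \cite{EpSh-peripull}, it isolates exactly why ``local'' suffices in place of ``valuation,'' and the flatness descent reuses the paper's own Richman-criterion machinery (Lemma~\ref{pullbackflat}). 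What the paper's route buys: brevity, at the cost of asking the reader to confirm that an external proof goes through verbatim in greater generality.
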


As a corollary, we obtain the following analog of~\cite[Theorem 2.7]{EpSh-peri} for globally perinormal domains.

\begin{corollary}\label{gp-D+M} Let $D$ be an integral domain with fraction field $k$. Let $(T, M)$ be a local domain with residue field $k$. If the pullback $D \times_k T$ is globally perinormal, then $D$ and $T$ are globally perinormal. If $T$ is a valuation domain and $D$ is globally perinormal, then $D \times_k T$ is globally perinormal.
\end{corollary}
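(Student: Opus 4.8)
The plan is to unwind \emph{globally perinormal} into its two defining conditions---perinormality together with the requirement that every flat overring be a localization---and to verify each condition separately, quoting the transfer results already established rather than re-arguing about pullbacks from scratch.

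For the first statement, suppose $D \times_k T$ is globally perinormal. In particular $D \times_k T$ is perinormal, so Observation~\ref{genD+Mperi} shows that $D$ and $T$ are perinormal; it remains to establish the flat-overring condition for each. Since every flat overring of $D \times_k T$ is a localization of $D \times_k T$, Proposition~\ref{fqrpullback} immediately gives that every flat overring of $D$ is a localization of $D$, so $D$ is globally perinormal. For $T$, I would first recall from the proof of Observation~\ref{genD+Mperi} that $T = (D \times_k T)_M$ is a localization of $B \coloneqq D \times_k T$ via Proposition~\ref{Fontana-props}(3). Given a flat overring $A$ of $T$, the composite of the flat extensions $B \hookrightarrow B_M = T \hookrightarrow A$ is flat, so $A$ is a flat overring of $B$ and hence $A = B_W$ for some multiplicative set $W \subseteq B$. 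Because $T = B_M \subseteq A$, every element of $B \setminus M$ is already a unit in $A$, whence $B_W = (B_M)_{\overline{W}} = T_{\overline{W}}$ is a localization of $T$. Thus $T$ satisfies the flat-overring condition and, being perinormal, is globally perinormal.

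For the second statement, assume $T$ is a valuation domain and $D$ is globally perinormal. Then $D$ is perinormal, so $D \times_k T$ is perinormal by~\cite[Theorem 2.7]{EpSh-peripull}. Since $D$ is globally perinormal, every flat overring of $D$ is a localization of $D$, and therefore the converse half of Proposition~\ref{fqrpullback}---available precisely because $T$ is a valuation domain---yields that every flat overring of $D \times_k T$ is a localization of $D \times_k T$. Being perinormal with every flat overring a localization, $D \times_k T$ is globally perinormal.

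The step I expect to be the main obstacle is the flat-overring claim for $T$ in the first statement, since it is the one point not handed to me directly by Proposition~\ref{fqrpullback}: I must pass a flat overring of the localization $T$ down to a flat overring of $D \times_k T$ and then recognize the resulting localization of $D \times_k T$ as a localization of $T$. This rests only on transitivity of flatness and on the elementary fact that a localization of $B$ containing $B_M$ is itself a localization of $B_M$, so while not deep, it is the part requiring an honest argument about multiplicative sets rather than a citation.
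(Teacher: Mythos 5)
Your proof is correct, and two of its three components---the deduction for $D$ and the converse direction when $T$ is a valuation domain---coincide exactly with the paper's proof (Observation~\ref{genD+Mperi} plus Proposition~\ref{fqrpullback} for $D$; \cite[Theorem 2.7]{EpSh-peripull} plus Proposition~\ref{fqrpullback} for the converse). The one place you diverge is the claim that $T$ is globally perinormal. The paper dispatches this in one line: $T$ is a localization of the globally perinormal domain $D \times_k T$ by Proposition~\ref{Fontana-props}(3), and localizations of globally perinormal domains are globally perinormal by \cite[Proposition 6.1]{EpSh-peri}. You instead reprove that descent fact by hand: perinormality of $T$ from Observation~\ref{genD+Mperi}, and for the flat-overring condition you pull a flat overring $A$ of $T$ down to $B = D \times_k T$ by transitivity of flatness, write $A = B_W$, and note that since $B_M \subseteq A$ every element of $B \setminus M$ is a unit in $A$, so $B_W = B_{W\cdot(B\setminus M)} = T_{\overline{W}}$. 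This argument is sound---the facts it needs ($M$ is prime in $B$ because $B/M \cong D$ is a domain, $T$ and $B$ share a fraction field since $T = B_M$, and flatness composes) all hold here---and it has the virtue of being self-contained, avoiding the external citation. What it costs is length: you are essentially giving an in-line proof of the special case of \cite[Proposition 6.1]{EpSh-peri} asserting that global perinormality passes to localizations, which the paper simply quotes.
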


\begin{proof} Let $D$ be an integral domain with fraction field $k$. Let $(T, M)$ be a local domain with residue field $k$. Suppose $D \times_k T$ is globally perinormal. So every flat overring of $D \times_k T$ is a localization of $D \times_k T$. By Observation~\ref{genD+Mperi}, $D$ is perinormal. To show that $D$ is globally perinormal, it suffices to show that every flat overring of $D$ is a localization of $D$ but this follows directly by Proposition~\ref{fqrpullback}.

By Proposition~\ref{Fontana-props}(3), $T$ is a localization of $D \times_k T$ which is globally perinormal. Hence $T$ is globally perinormal by~\cite[Proposition 6.1]{EpSh-peri}.

Now suppose $T$ is a valuation domain and $D$ is globally perinormal. Then $D \times_k T$ is perinormal by~\cite[Theorem 2.7]{EpSh-peripull}. Because every flat overring of $D$ is a localization of $D$, it follows by Proposition~\ref{fqrpullback} that every flat overring of $D \times_k T$ is a localization of $D \times_k T$. Hence $D \times_k V$ is globally perinormal. 
\end{proof}

\section*{Acknowledgments}
I am very grateful to my advisor Neil Epstein for suggesting the question which led to this work and for helpful advice and input along the way. I am also grateful to Evan Houston for catching an error in a previous version of the paper and the referee for helpful comments which improved the paper.

\bibliographystyle{amsalpha}
\bibliography{references}

\providecommand{\bysame}{\leavevmode\hbox to3em{\hrulefill}\thinspace}
\providecommand{\MR}{\relax\ifhmode\unskip\space\fi MR }
\providecommand{\MRhref}[2]{%
  \href{http://www.ams.org/mathscinet-getitem?mr=#1}{#2}
}
\providecommand{\href}[2]{#2}
\begin{thebibliography}{FHP97}

\bibitem[BG73]{GiBa-overringsDM}
Eduardo Bastida and Robert Gilmer, \emph{Overrings and divisorial ideals of
  rings of the form ${D} + {M}$}, The Michigan Mathematics Journal \textbf{20}
  (1973), 79--95.

\bibitem[ES16]{EpSh-peri}
Neil Epstein and Jay Shapiro, \emph{Perinormality - a generalization of {K}rull
  domains}, Journal of Algebra \textbf{451} (2016), 65--84.

\bibitem[ES19]{EpSh-peripull}
\bysame, \emph{Perinormality in pullbacks}, Journal of Commutative Algebra
  \textbf{11} (2019), no.~3, 341–362.

\bibitem[FHP97]{Prufer}
Marco Fontana, James~A. Huckaba, and Ira~J. Papick, \emph{Pr\"{u}fer domains},
  Marcel Dekker, Inc., 1997.

\bibitem[Fon80]{Fon-toprings}
Marco Fontana, \emph{Topologically defined classes of commutative rings},
  Annali di Matematica Pura ed Applicata \textbf{123} (1980), 331--355.

\bibitem[Mat86]{Mat}
Hideyuki Matsumura, \emph{Commutative ring theory}, Cambridge University Press,
  1986.

\bibitem[Ric65]{Ric-genqr}
Fred Richman, \emph{Generalized quotient rings}, Proceedings of the American
  Mathematical Society \textbf{16} (1965), 794–799.

\end{thebibliography}

\end{document}